\newcommand{\fenv}[1]%
{\ensuremath{\,\overrightarrow{\operatorname{env}}_{#1}}}
\newcommand{\benv}[1]%
{\ensuremath{\,\overleftarrow{\operatorname{env}}_{#1}}}
\theoremstyle{thmstyleone}%
\theoremstyle{thmstyletwo}%
\theoremstyle{thmstylethree}%
\newtheorem{theorem}{Theorem}
\newtheorem{lemma}[theorem]{Lemma}
\newtheorem{corollary}[theorem]{Corollary}
\newtheorem{proposition}[theorem]{Proposition}%
\newtheorem{definition}{Definition}%
\theoremstyle{definition}
\newcommand{\nexto}{\kern -0.54em}
\newcommand{\dZ}{{\cal Z \kern -0.7em Z}}
\newcommand{\dC}{{\rm\hbox{C \kern-0.8em\raise0.2ex\hbox{\vrule height5.4pt width0.7pt}}}}
\newcommand{\dQ}{{\rm\hbox{Q \kern-0.85em\raise0.25ex\hbox{\vrule height5.4pt width0.7pt}}}}
\newcommand{\RR}{\mathbb{R}}
\date{}
\begin{document}
\title{Translation Mappings of Quasimonotonicity Beyond Smoothness 
}
\author{Oday Hazaimah
\footnote{E-mail: {\tt odayh982@yahoo.com}. https://orcid.org/0009-0000-8984-2500. 
St. Louis, MO, USA}}
%

\maketitle\thispagestyle{fancy}
\maketitle
\begin{abstract}\noindent Monotonicity of a mapping implies its pseudomonotonicity and hence quasimonotonocity, the converse is not true. In this note we intend to study the situations under which quasimonotonicity of a mapping implies its monotonicity. Thus we generalize some results in the literature related to the connection between monotonocity and its generalized classes for multi-valued mappings via translation maps in real topological spaces. No differentiability assumption is required but continuity assumption is imposed. 
\medskip 

\noindent {\textbf{Keywords}:}  Generalized Jacobian; Subgradients; Monotone Operators and Generalizations.
\medskip 

\noindent \textbf{\small Mathematics Subject Classification:}{ 47H04, 47H05, 47H07}
\end{abstract}

\maketitle

\section{Introduction}
Monotone maps are fundamental components in nonlinear analysis and can be considered as a generalizations of convex functions when gradient maps exist. Monotonicity and convexity are closely related. It is well established that convex functions can be characterized by their Clarke generalized subdifferential maps: namely, subgradients of real-valued convex functions have the property of being monotone (see \cite{Clarke,Rockafellar}). This fact has been considered for lower semi continuous functions and for locally Lipschitzian functions on $n$-dimensional Euclidean spaces (see, for more details \cite{Clarke,Komlosi,Somayeh}). However, according to the emerging needs appearing in many applications as in optimization and economics, it is highly desired to proceed beyond monotonicity. Notions of pseudomonotone and quasimonotone operators have been introduced in the literature to consider a broader class than monotonicity see, for instance, \cite{Karamardian,KaramSch,Komlosi} and the references therein. Generalized monotone maps play a crucial role in applications and they address uniqueness and stability questions in the general theory of economic equilibrium. For instance, the class of quasivariational inequalities (an extension of variational inequalities where the constraint set is a moving set) defined over infinite dimensional Banach space, characterizes the dynamic competitive equilibrium problem for time-dependent pure exchange economy in which pseudomonotonicity assumption is imposed on the map defining the inequalities (see \cite{Sultana}).
It is well known that a monotone mapping is pseudomonotone, but the converse is not generally true. Similarly, any pseudomonotone operator is quasimonotone, but the converse is not true. Due to these connections an interesting investigation address questions like when and under what conditions a pseudomonotone or quasimonotone operators coincide with monotone maps. Generalized monotonicity plays the role of monotonicity without losing important properties for the analysis and solution in many classical models. 
It is known that \cite{KaramSch} differentiable generalized monotone maps in terms of the Jacobian can be extended to nondifferentiable locally Lipschitz maps using the generalized Jacobian in the sense of Clarke \cite{Clarke}.
A characterization of pseudoconvex functions in terms of the gradient without using function values was derived by Karamardian \cite{Karamar}, and later he used his characterization of pseudoconvex functions to introduce the notion of pseudomonotonicity for maps that are not necessarily the gradient of a function. Numerous characterizations have been proposed for certain types of generalized convex functions, see e.g. \cite{Komlosi,Luc,Schaible}. 
The smooth characterizations in \cite{KaramSch} for quasimonotone and pseudomonotone maps have been extended by Luc and Schaible \cite{Schaible} to the nondifferentiable case. If $F$ is a quasimonotone map then the translation map $F+\omega$ is not quasimonotone for any $\omega$ belongs to the dual space of a topological vector space $X$. In the case of a single-valued, linear map $F$ defined on the whole space $\RR^n$, it is known for instance that if $F+\omega$ is quasimonotone, then $F$ is monotone \cite{KaramSch}. An extension of this result has been considered and studied, by He \cite{He}, into a more general case when $F$ is a nonlinear map defined on the Euclidean space $\RR^n$. While Isac and Motreanu \cite{IsacMotreanu} tackled the same result when $X$ is an infinite-dimensional Hilbert space with nonempty interior convex domains. If $F$ is a single-valued, continuous map which is Gateaux differentiable on the interior convex domain such that $F+\omega$ is quasimonotone, 
then $F$ is monotone. Differentiability is essential in the previous literature since the arguments were based on first-order characterizations of generalized monotonicity \cite{John}. The work of Hadjisavvas \cite{Hadjisavvas}  extended the result of interest to set-valued maps defined on convex sets in topological vector spaces in which the differentiability assumption was relaxed. The authors in \cite{Schaible} characterized nonsmooth monotone maps in terms of the generalized Jacobian for convex sets in finite dimensional Euclidean space and extended necessary and sufficient conditions to a variety of generalized monotone classes (quasi- /pseudo- /strictly /strongly monotone maps). 
\medskip

The purpose of this note is to draw some concerns between monotonicity and quasimonotonicity by translation maps with the absence of smoothness in topological vector spaces in attempt to broaden the theory of generalized monotone maps. Thus the present paper aims to extend a result by \cite{Schaible} from finite-dimensional Euclidean spaces to real topological vector spaces for nonsmooth maps; and to complete a one-sided implication result by \cite{Hadjisavvas} for multivalued maps defined on convex sets. No assumption of differentiability has been made, and the domain does not have a nonempty interior, thus we can apply the result broadly to any vector space with a partial order on its positive elements such as the positive cones of $l_+^2, L_+^2$, the set of positive semidefinite matrices and the nonnegative orthant $\RR^n_+$.

\section{Preliminaries} 
In this section, we provide a short review on the types of generalized monotone maps and recall some basic properties and notations from the theory of nonsmooth maps which can be found in details in \cite{Clarke}.
Let $X$ be a topological vector space and $X^*=L(X,\RR)$ its dual space (The set of all linear maps from $X$ to $\RR$). Let $\Omega$ be a nonempty convex subset of $X$, and $F:\Omega\rightrightarrows X^*$ is a set-valued map. Given $x,y\in X$, 
we denote by $[x,y]$ the line segment $\{(1-\lambda)x+\lambda y: \lambda\in [0,1]\}$. A straight line $L$ in the dual space is defined by $L=\{u+\lambda v:\lambda\in\RR\}$ such that $u\in X^*, v\in X^*\setminus \{0\}$. For any vector in the dual space $v\in X^*$, is said to be orthogonal to $\Omega$ if $\langle v,x\rangle=\langle v,y\rangle$ for all $x,y\in\Omega$. $L$ is orthogonal to $\Omega$ if $v$ is orthogonal to $\Omega$.
The generalized Jacobian matrix (subdifferential mapping) of $F$ at $x\in\Omega$ is defined by 
\medskip 

$$\partial F(x):=conv\Big\{\lim_{i\to\infty} \ JF(x_i) : x_i\to x, F \ \text{is differentiable at} \ x_i\Big\},$$ 
where \textit{conv} denotes the convex hull and $JF(x)$ is the Jacobian of $F$ at $x,$ i.e., the convex hull of all points on the form $\lim JF$ where the sequence $\{x_i\}$ is arbitrary and converges to $x$. If $F$ is a single-valued convex function then $\partial F$ is the subdifferential operator which is always exist. If $F$ is convex then $\partial F$ is a maximal monotone mapping. If $F$ is differentiable gradient mapping then $\partial F$ is precisely the gradinet maping $\{\nabla F\}$. If $F$ is locally Lipschitz map then the set $\partial F(x)$ is nonempty compact. Next definition draws properties and relations between monotone maps and generalized monotone (i.e., psedumonotone and quasimonotone) maps. 
\begin{definition}\label{monotonedefi} Let $\Omega$ be an arbitrary subset of $X$. The mapping $F:\Omega\to X$, is said to be:
\begin{itemize}
\item [(i)] Monotone if for all $x, y \in\Omega$,
$$\langle F(x)-F(y), x-y\rangle \geq 0.$$ 
\item [(ii)] Strictly monotone if the above inequality is strict for all $x\not= y$ in $\Omega.$
\item [(iii)] Strongly monotone if there exists a modulus $\lambda > 0$ such that for all $x, y\in\Omega,$
$$\langle F(x)-F(y), x-y\rangle\geq\lambda\|x-y\|^2.$$
\item [(iv)] Pseudomonotone (in the sense of Karamardian \cite{Karamardian}) if for all $x, y\in\Omega,$
$$\langle F(y), x-y\rangle\geq 0\implies \langle F(x), x-y\rangle\geq 0.$$
\item [(v)] Quasimonotone (in the sense of Karamardian \cite{Karamardian}) if for all $x, y\in\Omega,$
$$\langle F(y), x-y\rangle > 0\implies \langle F(x), x-y\rangle\geq 0.$$
\end{itemize}
\end{definition}
We will introduce the following useful lemma and proposition that we need before we proceed to the main result.
\begin{lemma}\label{monotonelemma}
Let $F:\Omega\rightrightarrows X^*$ be a set-valued map on a nonempty convex subset $\Omega$ of a topological vector space $X$. Assume that there exists a straight line $L\subset X^*$, that is not orthogonal to $\Omega$, such that for every $\omega\in L$, $F+\omega$ is quasimonotone. Then the restriction of $F$ on any line segment of $\Omega$ which is not orthogonal to $L$ is monotone.
\end{lemma}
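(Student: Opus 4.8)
The plan is to reduce the claim to an elementary one-parameter argument that exploits the freedom carried by the line $L$. Fix a segment $[a,b]\subseteq\Omega$ that is not orthogonal to $L$, take two distinct points $x,y\in[a,b]$, and pick arbitrary selections $\xi\in F(x)$ and $\eta\in F(y)$; the goal is to show $\langle\xi-\eta,x-y\rangle\ge 0$, which is precisely monotonicity of the restriction. Writing $L=\{u+\lambda v:\lambda\in\RR\}$, I would abbreviate $p=\langle\xi,x-y\rangle$, $q=\langle\eta,x-y\rangle$, $c=\langle u,x-y\rangle$ and $d=\langle v,x-y\rangle$, so that for $\omega=u+\lambda v\in L$ the two pairings that matter become $\langle\eta+\omega,x-y\rangle=q+c+\lambda d$ and $\langle\xi+\omega,x-y\rangle=p+c+\lambda d$.

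First I would pin down exactly where the geometric hypotheses are used. Since $x,y$ lie on $[a,b]$ we may write $x-y=(s-t)(b-a)$ for scalars $s\ne t$, whence $d=(s-t)\langle v,b-a\rangle$; because the segment is not orthogonal to $L$ (equivalently $\langle v,b-a\rangle\ne0$) and $x\ne y$, this forces $d\ne0$. After possibly interchanging $x$ and $y$ — an operation that only swaps the roles of $\xi$ and $\eta$ and leaves the target inequality unchanged by symmetry — I may assume $d>0$. This is the sole point at which the assumptions on the segment and on $L$ enter, and it is what lets the quantity $\lambda d$ sweep across all of $\RR$ as $\lambda$ varies.

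The heart of the proof is then a tuning step, run by contradiction. Suppose $p<q$. Because $d>0$, the open interval $\big(-(q+c),\,-(p+c)\big)$ has positive length $q-p$, so I can choose $\lambda$ with $-(q+c)<\lambda d<-(p+c)$, which gives simultaneously $q+c+\lambda d>0$ and $p+c+\lambda d<0$. For this $\omega=u+\lambda v$ the element $\eta+\omega\in(F+\omega)(y)$ satisfies $\langle\eta+\omega,x-y\rangle>0$, so the quasimonotonicity of $F+\omega$ forces $\langle\xi'+\omega,x-y\rangle\ge0$ for every $\xi'\in F(x)$, in particular for $\xi$; yet $\langle\xi+\omega,x-y\rangle=p+c+\lambda d<0$, a contradiction. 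Hence $p\ge q$, i.e. $\langle\xi-\eta,x-y\rangle\ge0$, and since the selections were arbitrary the restriction of $F$ to $[a,b]$ is monotone.

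I expect the genuinely delicate points to be bookkeeping rather than conceptual: correctly matching the set-valued form of quasimonotonicity — an existential hypothesis on $(F+\omega)(y)$ against a universal conclusion on $(F+\omega)(x)$ — to the single selections $\xi,\eta$, and checking that the reduction to $d>0$ is consistent with the symmetry of the monotonicity inequality. The essential mechanism is that the free parameter $\lambda$ attached to $L$ is exactly what upgrades the one-sided quasimonotone implication into the two-sided monotone estimate.
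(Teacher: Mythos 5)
Your proof is correct. One thing you could not have known: the paper states Lemma \ref{monotonelemma} with no proof at all --- it is imported from the cited work of Hadjisavvas \cite{Hadjisavvas} --- so there is no in-paper argument to compare against, and your write-up in fact supplies exactly what the paper omits. Your argument is the standard translation-tuning proof of this result: the two non-orthogonality hypotheses combine to give $d=\langle v,x-y\rangle\neq 0$, the swap $x\leftrightarrow y$ legitimately normalizes $d>0$ because the target inequality $\langle \xi-\eta,x-y\rangle\geq 0$ is invariant under that swap, and choosing $\lambda$ with $-(q+c)<\lambda d<-(p+c)$ (possible precisely when $p<q$) makes $\langle \eta+\omega,x-y\rangle>0$ while $\langle \xi+\omega,x-y\rangle<0$, contradicting quasimonotonicity of $F+\omega$. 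Your handling of the set-valued quantifiers is also the right one: a single selection $\eta+\omega\in (F+\omega)(y)$ with positive pairing forces the conclusion for every element of $(F+\omega)(x)$, in particular for $\xi+\omega$, which is all the contradiction needs.
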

\begin{proposition}\label{prop}
Let $\Omega$ be a convex closed subset of $X$ and $F:\Omega\rightrightarrows X^*$ a continuous mapping.
Then the following statements are equivalent:
\begin{enumerate}
\item $F$ is monotone on $\Omega$,
\item there exists a straight line $L\subset X^*$, that is not orthogonal to $\Omega$, such that the mapping $F+\omega$ is pseudomonotone for any $\omega\in L$,
\item there exists a straight line $L\subset X^*$, that is not orthogonal to $\Omega$, such that the mapping $F+\omega$ is quasimonotone for any $\omega\in L.$
\end{enumerate}
\end{proposition}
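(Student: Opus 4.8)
The plan is to prove the equivalences by establishing the cyclic chain of implications $(1)\Rightarrow(2)\Rightarrow(3)\Rightarrow(1)$. The first two implications are elementary and essentially reduce to the definitions together with the existence of a suitable line, so the entire weight of the proposition falls on $(3)\Rightarrow(1)$, which I intend to deduce from Lemma~\ref{monotonelemma} reinforced by the continuity hypothesis.

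For $(1)\Rightarrow(2)$ I would first note that translating by a fixed functional preserves monotonicity: for any $\omega\in X^*$ one has $(F+\omega)(x)-(F+\omega)(y)=F(x)-F(y)$, so if $F$ is monotone then $F+\omega$ is monotone, hence pseudomonotone, for \emph{every} $\omega$. It then remains only to exhibit one straight line $L$ that is not orthogonal to $\Omega$. Assuming $\Omega$ is not a singleton, I pick $x_0\neq y_0$ in $\Omega$ and choose $v\in X^*$ with $\langle v,x_0-y_0\rangle\neq 0$ (such $v$ exists since the dual separates points); then $L=\{\lambda v:\lambda\in\RR\}$ is not orthogonal to $\Omega$, and $F+\omega$ is pseudomonotone for all $\omega\in L$. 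The step $(2)\Rightarrow(3)$ is immediate from the definitions, since $\langle F(y),x-y\rangle>0$ implies $\langle F(y),x-y\rangle\geq 0$; thus every pseudomonotone map is quasimonotone and the same line $L$ witnessing (2) witnesses (3).

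The core is $(3)\Rightarrow(1)$. Let $L=\{u+\lambda v:\lambda\in\RR\}$ be the line from (3), with $v$ not orthogonal to $\Omega$. Applying Lemma~\ref{monotonelemma}, $F$ is monotone on every segment $[x,y]\subset\Omega$ that is not orthogonal to $L$; taking endpoints, this means $\langle F(x)-F(y),x-y\rangle\geq 0$ whenever $\langle v,x-y\rangle\neq 0$. To reach monotonicity on all of $\Omega$ I must still handle the \emph{orthogonal} pairs $x,y\in\Omega$ with $\langle v,x-y\rangle=0$, on which the lemma says nothing directly; this is the main obstacle. I would overcome it by a perturbation-and-limit argument. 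Since $v$ is not orthogonal to $\Omega$, there is $z\in\Omega$ with $\langle v,z-x\rangle\neq 0$. For $t\in(0,1]$ put $y_t:=(1-t)y+tz\in\Omega$ by convexity; then $\langle v,x-y_t\rangle=t\langle v,x-z\rangle\neq 0$, so $[x,y_t]$ is not orthogonal to $L$ and the lemma gives $\langle F(x)-F(y_t),x-y_t\rangle\geq 0$. Letting $t\to 0^+$ we have $y_t\to y$ and, by continuity of $F$, $F(y_t)\to F(y)$, so the inequality passes to the limit to yield $\langle F(x)-F(y),x-y\rangle\geq 0$. Combined with the non-orthogonal case, this proves $F$ is monotone on $\Omega$.

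The delicate point in the last paragraph is the limit step for a genuinely set-valued $F$: there one fixes $x^*\in F(x)$ together with a prescribed $y^*\in F(y)$ and must select $y_t^*\in F(y_t)$ with $y_t^*\to y^*$, so that the scalar inequalities along $[x,y_t]$ converge to the desired one. This is exactly where the continuity assumption (in the appropriate semicontinuous sense) is invoked, while convexity and closedness of $\Omega$ ensure that the perturbing points $y_t$ and their limit remain in $\Omega$. Apart from this convergence bookkeeping, the argument is a routine assembly of the lemma with the approximation scheme.
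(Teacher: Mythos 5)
Your proposal is correct and follows essentially the same route as the paper: the straightforward implications $(1)\Rightarrow(2)\Rightarrow(3)$ from the definitions, and $(3)\Rightarrow(1)$ via Lemma~\ref{monotonelemma} combined with a continuity argument. In fact, your explicit handling of the critical case of pairs $x,y\in\Omega$ with $\langle v,x-y\rangle=0$ --- perturbing $y$ to $y_t=(1-t)y+tz$ inside $\Omega$ and passing to the limit using (lower semi)continuity of $F$ --- is precisely the detail that the paper compresses into the single sentence ``since $\Omega$ is closed, the continuity of $F$ implies that $F$ is monotone on $\Omega$,'' so your write-up is, if anything, more complete than the original.
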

\begin{proof}
The two implications $(i)\implies (ii)$ and $(ii)\implies (iii)$ is a straightforward consequence of Definition \ref{monotonedefi}. For the remaining implication $(iii)\implies (i)$, we invoke Lemma \ref{monotonelemma} and monotonocity for the restricted $F$ on any line segment of $\Omega$ is guaranteed. Since $\Omega$ is closed, the continuity of $F$ implies that $F$ is monotone on $\Omega$.
\end{proof}

\section{Main Results}
We introduce the main result of this note. The proof of the main theorem basically has constructed by employing the notion of monotonicity for the broader class of topological spaces.
\begin{theorem}\label{mainthm}
Let $X$ be a real topological vector space, and let $\Omega$ be a nonempty convex subset of $X$, and $F:\Omega\rightrightarrows X^*$ be a set-valued map. Then there exists a straight line $L\subset X^*$, that is not orthogonal to $\Omega$, such that for every $\omega\in L$, the translation map $F+\omega$ is quasimonotone if and only if the subdifferential mapping $\partial F(x)$ is positive semidefinite for every $x\in\Omega$.  
\end{theorem}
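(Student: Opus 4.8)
The plan is to prove the biconditional by inserting monotonicity of $F$ as an intermediate condition and establishing two separate equivalences: first, that the translation condition on $L$ is equivalent to monotonicity of $F$ on $\Omega$; and second, that monotonicity of $F$ is equivalent to positive semidefiniteness of $\partial F(x)$ for every $x\in\Omega$, where a set of linear operators is called positive semidefinite when $\langle Mh,h\rangle\geq 0$ for every member $M$ and every $h\in X$. The first equivalence is exactly the content of Proposition \ref{prop}: its statements (i) and (iii) assert that $F$ is monotone if and only if there is a line $L$, not orthogonal to $\Omega$, with $F+\omega$ quasimonotone for all $\omega\in L$. Since continuity of $F$ is assumed throughout and $\Omega$ may be replaced by its closure, I would invoke Proposition \ref{prop} directly; in the decisive direction its engine is Lemma \ref{monotonelemma}, which promotes the translation condition to monotonicity along every line segment of $\Omega$ not orthogonal to $L$, and continuity then upgrades this to monotonicity on all of $\Omega$.

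It remains to establish that $F$ is monotone on $\Omega$ if and only if $\partial F$ is positive semidefinite. For the direction from monotonicity to positive semidefiniteness, I would work at the points $x_i$ where $F$ is differentiable: applying monotonicity to $x_i$ and $x_i+th$ gives $\langle F(x_i+th)-F(x_i),th\rangle\geq 0$; dividing by $t^2$ and letting $t\to 0$ yields $\langle JF(x_i)h,h\rangle\geq 0$, so each such Jacobian is positive semidefinite. Because the cone of positive semidefinite operators is closed under passage to limits and under convex combinations, every operator $\lim_i JF(x_i)$ and every element of their convex hull is again positive semidefinite; hence $\partial F(x)$ is positive semidefinite for each $x\in\Omega$.

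For the converse I would use a mean value theorem of Lebourg type for the generalized Jacobian: given $x,y\in\Omega$, there exist a point $z$ on the open segment between $x$ and $y$ and an operator $M\in\overline{\mathrm{conv}}\,\partial F(z)$ with $F(x)-F(y)=M(x-y)$. Pairing both sides with $x-y$ and using that $M$ is positive semidefinite gives $\langle F(x)-F(y),x-y\rangle=\langle M(x-y),x-y\rangle\geq 0$, which is precisely monotonicity of $F$. An alternative route, closer in spirit to the preliminaries, restricts $F$ to each segment $[x,y]$, integrates the derivative along the segment to conclude monotonicity segment by segment, and then globalizes by continuity as in the proof of Proposition \ref{prop}.

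The main obstacle I expect is reconciling the finite-dimensional machinery underlying $\partial F$ with the stated generality of a topological vector space. The very definition of $\partial F(x)$ as the convex hull of limits $\lim_i JF(x_i)$ over differentiability points presupposes that $F$ is differentiable on a dense, almost-everywhere set, a Rademacher-type fact available for locally Lipschitz maps on $\RR^n$ but not in an arbitrary $X$, and the Lebourg mean value theorem invoked in the converse is likewise a finite-dimensional statement. The careful part is therefore to fix the differentiability hypotheses on $F$ (local Lipschitz continuity, and differentiability along segments) under which both the difference-quotient computation and the mean value representation remain valid, and to verify that positive semidefiniteness of each $M\in\partial F(z)$ transfers to the closed convex hull appearing in the mean value theorem. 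Once those hypotheses are in place, the two equivalences combine to yield the theorem.
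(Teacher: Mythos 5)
Your overall skeleton matches the paper's: both pivot through monotonicity of $F$ as the intermediate condition, both prove monotone $\Rightarrow$ positive semidefinite via difference quotients at differentiability points (the paper argues by contradiction, you argue directly, but it is the same computation), both prove the converse via Clarke's mean value theorem $F(x)-F(y)\in conv\{\partial F([x,y])(x-y)\}$, and both close the loop with Proposition \ref{prop} (i)$\implies$(iii). Your closing remark about the finite-dimensional nature of $\partial F$ and the Lebourg theorem is a legitimate criticism --- the paper itself glosses over exactly this point.

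There is, however, one genuine gap in your treatment of the direction ``quasimonotone translations $\implies$ monotone.'' You invoke Proposition \ref{prop}, whose hypotheses are that $\Omega$ is \emph{closed} and $F$ is \emph{continuous}; neither appears in the statement of Theorem \ref{mainthm}, and your proposed repair --- ``$\Omega$ may be replaced by its closure'' --- does not work, because $F$ is only defined on $\Omega$ and no continuous extension to $\overline{\Omega}$ is available or justified. The real difficulty that Proposition \ref{prop} papers over with continuity is this: Lemma \ref{monotonelemma} yields monotonicity only on segments $[x,y]$ that are \emph{not} orthogonal to $L$, so pairs with $\langle v,x-y\rangle=0$ need a separate argument. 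The paper supplies one that uses only convexity of $\Omega$: pick $z\in\Omega$ with $\langle v,z\rangle\neq\langle v,x\rangle$ (possible since $L$ is not orthogonal to $\Omega$), set $z_{\alpha}=\alpha z+(1-\alpha)\frac{x+y}{2}$, observe that $[x,z_{\alpha}]$, $[y,z_{\alpha}]$ and $[\frac{x+y}{2},z]$ are all non-orthogonal to $L$, apply Lemma \ref{monotonelemma} on each, combine the three resulting inequalities, and let $\alpha\to 0$ to obtain $\langle x^*-y^*,x-y\rangle\geq 0$ for all $x^*\in F(x)$, $y^*\in F(y)$. This auxiliary-point construction is the idea your proposal is missing; without it (or without adding closedness and continuity to the theorem's hypotheses) your first equivalence proves a strictly weaker statement than the one claimed.
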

\begin{proof}
We first begin by the "only if" part.
Let $x,y\in\Omega$. If $\langle v,x-y\rangle\not=0$ then $L$ is not orthogonal to $[x,y]$; thus by Lemma \ref{monotonelemma}, $F$ is monotone on $[x,y]$ and hence quasimonotone. 
Now assume that $\langle v,x-y\rangle=0$. Since $L$ is not orthogonal to $\Omega$, there exists $z\in\Omega$ such that $\langle v,z\rangle\not=\langle v,x\rangle$ and $\langle v,z\rangle\not=\langle v,y\rangle.$ Let the parameter $\alpha\in(0,1)$ and for all $\alpha$ define the averaged point $z_{\alpha}=\alpha z+(1-\alpha)\displaystyle\frac{x+y}{2}.$ Then $L$ is not orthogonal to the line segments $[x,z_{\alpha}], \ [y,z_{\alpha}]$ and $[\frac{x+y}{2},z]$; thus its restriction to each of these three line segments is monotone by Lemma \ref{monotonelemma}. We conclude that the following inclusions; $x^*\in F(x), y^*\in F(y), z^*\in F(z)$ and $z^*_{\alpha}\in F(z_{\alpha})$ coincide with the following inequalities: 
\begin{enumerate}
\item $\langle z_{\alpha}^*,z_{\alpha}-x\rangle\geq\langle x^*,z_{\alpha}-x\rangle$
\item $\langle z_{\alpha}^*,z_{\alpha}-y\rangle\geq\langle y^*,z_{\alpha}-y\rangle$
\item $\langle z^*,z-z_{\alpha}\rangle\geq\langle z^*_{\alpha},z-z_{\alpha}\rangle$
\end{enumerate}
It is clear, from $(i)$ and $(ii)$, that $\langle z_{\alpha}^*,2z_{\alpha}-(x+y)\rangle\geq\langle x^*,z_{\alpha}-x\rangle+\langle y^*,z_{\alpha}-y\rangle,$ this inequality can be rewritten as 
\begin{equation}\label{(i)+(ii)}
2\alpha\big\langle z_{\alpha}^*,z-\frac{x+y}{2}\big\rangle\geq\langle x^*,z_{\alpha}-x\rangle+\langle y^*,z_{\alpha}-y\rangle.
\end{equation}
Moreover, $(iii)$ can be written as 
\begin{equation*}
\big\langle z^*,(1-\alpha)\big(z-\frac{x+y}{2}\big)\big\rangle\geq\big\langle z^*_{\alpha},(1-\alpha)\big(z-\frac{x+y}{2}\big)\big\rangle.
\end{equation*}
Consequently, and since $(1-\alpha)$ is positive, we obtain 
\begin{equation}\label{(iii)}
\big\langle z^*,\big(z-\frac{x+y}{2}\big)\big\rangle\geq\big\langle z^*_{\alpha},\big(z-\frac{x+y}{2}\big)\big\rangle
\end{equation}
It follows, by combining \eqref{(i)+(ii)} and \eqref{(iii)}, that 
$$2\alpha\big\langle z^*,z-\frac{x+y}{2}\big\rangle\geq\langle x^*,z_{\alpha}-x\rangle+\langle y^*,z_{\alpha}-y\rangle.$$ 
Now consider the case when $\alpha$ is sufficiently small and approaching the lower bound $0$. Then $\displaystyle\lim_{\alpha\to 0}z_{\alpha}=\frac{x+y}{2}$, and $\langle x^*-y^*,y-x\rangle\leq 0$ which means $F$ is monotone on the line segment $[x,y]$ for all $x,y\in\Omega$ and all $x^*\in F(x),\ y^*\in F(y)$. Since the interval $[x,y]$ is arbitrary in $\Omega$ then $F$ is monotone on $\Omega$. Using the definition, by Clarke \cite{Clarke}, of the generalized Jacobian subdifferential, it is only required to prove that the Jacobian $JF$ is positive semidefinite whenever it exists. Given $x\in\Omega$ and assume that the Jacobian $JF(x)$ exists. Suppose, on the contrary, that $JF(x)$ is not positive semidefinite. Then there exist $u\in X$ such that $\langle u,JF(x)u\rangle<0$. Hence by the definition of the Jacobian
$$JF(x)u:=\lim_{t\to 0}\frac{F(x+tu)-F(x)}{t}$$
and for sufficiently small $t>0$, we have $\langle u,F(x+tu)-F(x)\rangle<0.$ Setting $y=x+tu$ and for a fixed small $t\in (0,1)$, we have 
$$\langle y-x,F(y)-F(x)\rangle <0$$
which is a contradiction to the monotonicity of $F.$ Thus the generalized Jacobian (i.e., the subdifferential operator) is positive semidefinite. Hence, the subgradients (elements of subdifferential mapping) are positive semidefinite. Suppose that we are given two points $x,y\in L$, by the mean value theorem \cite{Clarke}, we have 
$$F(x)-F(y)\in conv \big\{\partial F([x,y])(x-y)\big\}.$$
Thus 
$$\langle F(x)-F(y),x-y\rangle\in conv \big\{\langle x-y,G(x-y)\rangle:G\in\bigcup_z\partial F(z)\big\}$$
where $G$ is a subgradient and $z$ is a convex combination of $x$ and $y.$ 
Since every element of the above convex hull is nonnegative, therefore we have 
$\langle F(x)-F(y),x-y\rangle\geq 0$, which means $F$ is monotone on $\Omega$. By invoking Proposition \ref{prop}, particularly the implication $(i)\implies (iii)$, then 
there exists a straight line $L\subset X^*$ such that the mapping $F+\omega$ is quasimonotone for any $\omega\in L.$
\end{proof}
It is worth mentioning that there is only a sufficient condition when $F$ is strictly monotone but not necessary, as in the differentiable case. The argument is very similar to the previous proof except that it yields the positive definiteness which is a special case of positive semidefinitness. This implies the following consequent result.
\begin{corollary}
The assumptions are as in Theorem \ref{mainthm}. Then there exists a straight line $L\subset X^*$, that is not orthogonal to $\Omega$, such that for every $\omega\in L$, the translation map $F+\omega$ is quasimonotone if and only if the subdifferential mapping $\partial F(x)$ is positive definite for every $x\in\Omega$.  
\end{corollary}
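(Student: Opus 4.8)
The plan is to transcribe the proof of Theorem~\ref{mainthm} almost verbatim, systematically replacing ``monotone'' by ``strictly monotone'' and ``positive semidefinite'' by ``positive definite,'' since strict monotonicity (Definition~\ref{monotonedefi}(ii)) is the order-theoretic counterpart of positive definiteness in exactly the way ordinary monotonicity corresponds to positive semidefiniteness. I would prove the two implications separately and flag the one I expect to be genuinely delicate.

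For the sufficiency direction (positive definite $\partial F \Rightarrow$ the translation condition) I would start from the mean value inclusion used at the end of the proof of Theorem~\ref{mainthm},
\[
\langle F(x)-F(y),x-y\rangle\in conv\big\{\langle x-y,G(x-y)\rangle: G\in\textstyle\bigcup_z\partial F(z)\big\}.
\]
If every $G\in\partial F(z)$ is positive definite, then for $x\neq y$ each generator $\langle x-y,G(x-y)\rangle$ is strictly positive, so every convex combination of such generators is strictly positive; hence $\langle F(x)-F(y),x-y\rangle>0$ for all $x\neq y$, i.e. $F$ is strictly monotone on $\Omega$. Strict monotonicity implies monotonicity, so Proposition~\ref{prop} (the implication $(i)\Rightarrow(iii)$) furnishes the required line $L\subset X^*$, not orthogonal to $\Omega$, with $F+\omega$ quasimonotone for every $\omega\in L$. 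This half is routine.

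For the necessity direction (translation condition $\Rightarrow$ positive definite $\partial F$) I expect the main obstacle, and I do not believe the implication holds as stated. Theorem~\ref{mainthm} already extracts from the translation hypothesis precisely that $\partial F$ is positive \emph{semi}definite, and the contradiction argument there only excludes $\langle u, JF(x)u\rangle<0$; it leaves the degenerate case $\langle u, JF(x)u\rangle=0$ untouched. Nothing in the quasimonotonicity of the translates forces the strict inequality $\langle u, JF(x)u\rangle>0$. Indeed a monotone but not strictly monotone $F$ is a counterexample: a nonzero skew-symmetric linear map on $\RR^n$ satisfies the translation condition (being monotone, via Proposition~\ref{prop}) yet has symmetric part zero and so fails positive definiteness. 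This is exactly the asymmetry anticipated in the remark preceding the statement---positive definiteness is sufficient but not necessary.

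To obtain a true equivalence I would therefore either weaken the corollary to the one-sided claim that positive definiteness of $\partial F(x)$ for every $x\in\Omega$ implies the translation condition, or strengthen the hypothesis by requiring each translate $F+\omega$ to be \emph{strictly} quasimonotone (the implication of Definition~\ref{monotonedefi}(v) with strict conclusion). The latter route would upgrade Lemma~\ref{monotonelemma} to strict monotonicity on line segments not orthogonal to $L$, and then the same $z_\alpha$ averaging device as in Theorem~\ref{mainthm}, combining the strict versions of inequalities $(i)$--$(iii)$ and passing to the limit $\alpha\to 0$, would yield $\langle x^*-y^*,y-x\rangle<0$ for $x\neq y$ and hence positive definiteness of $JF$ wherever it exists, with the generalized-Jacobian step transcribing unchanged. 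Under either repair the argument closes; as literally stated, only the sufficiency half survives.
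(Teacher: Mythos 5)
Your analysis is sound, and what it exposes is a defect in the corollary itself rather than a gap in your argument. The paper offers no genuine proof of this statement: its entire justification is the remark preceding the corollary, which asserts that ``the argument is very similar to the previous proof except that it yields the positive definiteness,'' i.e.\ a proof by analogy in which ``monotone'' is replaced by ``strictly monotone'' throughout Theorem~\ref{mainthm}. Carried out literally, that substitution establishes only your sufficiency half: if every $G\in\partial F(z)$ is positive definite, the mean value inclusion gives $\langle F(x)-F(y),x-y\rangle>0$ for $x\neq y$, so $F$ is strictly monotone, hence monotone, and Proposition~\ref{prop} produces the line $L$. It cannot establish the converse, because the contradiction argument in Theorem~\ref{mainthm} only excludes $\langle u,JF(x)u\rangle<0$, exactly as you observe. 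Your counterexample is valid: a nonzero skew-symmetric linear map (or even $F\equiv 0$) on $\RR^n$ is monotone, so every translate $F+\omega$ is monotone and therefore quasimonotone, and every line $L$ with direction $v\neq 0$ is non-orthogonal to $\Omega=\RR^n$; yet $\langle u,JF(x)u\rangle=0$ for all $u$, so $\partial F(x)$ is nowhere positive definite. Hence the ``only if'' direction of the corollary is false as stated; indeed, combined with Theorem~\ref{mainthm} it would force ``positive semidefinite $\Rightarrow$ positive definite,'' which is absurd.

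It is worth noting that the paper concedes your point and then contradicts itself: the sentence before the corollary admits that in the strict case ``there is only a sufficient condition \ldots but not necessary, as in the differentiable case,'' yet the corollary is nonetheless formulated as an equivalence. Your two proposed repairs are precisely the consistent resolutions: either weaken the corollary to the one-sided implication from positive definiteness of $\partial F$ to the translation property (which is all the paper's own remark licenses, and all your mean-value argument proves), or strengthen the hypothesis on the translates to a strict form of quasimonotonicity so that Lemma~\ref{monotonelemma} can be upgraded to strict monotonicity on segments. In short: your sufficiency proof coincides with the argument the paper intends, your refutation of the necessity direction is correct, and the statement as printed should be read as one-sided.
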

Notice that the assumption "$L$ is not orthogonal to $\Omega$" is necessary condition and hence cannot be relaxed as if we, for example, consider the space $X=\RR^3$ and the set $\Omega=\RR^2\times\{0\}$ along with the line $L=\{0\}\times\RR^2$. Define the map $F:\RR^2\times\{0\}\to\RR^3$ by $(x,y,0)\mapsto e^{-(x+y)}(1,1,0)$, it is concluded that $F$ is not monotone whereas $F+\omega$ is pseudomonotone for all lines $\omega\in L$. Hence even if we reduce the quasimonotonocity in Theorem \ref{mainthm} to pseudomonotonocity for all $\omega\in L$, the statement of $L$ is not orthogonal to $\Omega$ is still necessary. Since every pseudomonotone mapping is quasimonotone, then the following statement directs immediately from Theorem \ref{mainthm}.
\begin{corollary}
Let $X$ be a real topological vector space, and let $\Omega$ be a nonempty convex subset of $X$, and $F:\Omega\rightrightarrows X^*$ be a set-valued map. Assume that there exists a straight line $L\subset X^*$, that is not orthogonal to $\Omega$, such that for every $\omega\in L$, the $F+\omega$ is pseudomonotone. Then $\partial F(x)$ is positive semidefinite for every $x\in\Omega$.  
\end{corollary}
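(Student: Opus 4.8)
The plan is to reduce this statement to the forward (``only if'') implication of Theorem~\ref{mainthm}, observing that the only structural difference between the two hypotheses is that here each translate $F+\omega$ is assumed pseudomonotone rather than merely quasimonotone.

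First I would verify the elementary implication that pseudomonotonicity forces quasimonotonicity. Fixing $\omega\in L$ and working from Definition~\ref{monotonedefi}(iv)--(v), for arbitrary $x,y\in\Omega$ and any admissible selection of the map, the premise $\langle (F+\omega)(y),x-y\rangle>0$ entails in particular $\langle (F+\omega)(y),x-y\rangle\geq 0$; pseudomonotonicity of $F+\omega$ then yields $\langle (F+\omega)(x),x-y\rangle\geq 0$, which is precisely the quasimonotone condition. Hence $F+\omega$ is quasimonotone for every $\omega\in L$.

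Next I would note that $L$ is, by hypothesis, not orthogonal to $\Omega$, so the previous step shows that the full hypothesis of the ``only if'' direction of Theorem~\ref{mainthm} is in force: there exists a straight line $L\subset X^*$, not orthogonal to $\Omega$, along which every translate $F+\omega$ is quasimonotone. Invoking that direction of the theorem then delivers the conclusion, namely that $\partial F(x)$ is positive semidefinite for every $x\in\Omega$.

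The substantive analytic work --- the averaging construction across the segments $[x,z_{\alpha}]$, $[y,z_{\alpha}]$ and $[\tfrac{x+y}{2},z]$, the limiting argument as $\alpha\to 0$, and the passage through the nonsmooth mean value theorem to the generalized Jacobian --- all lives inside Theorem~\ref{mainthm} and Lemma~\ref{monotonelemma}. Consequently there is no genuine obstacle in the corollary itself; the only point deserving care is checking that the pseudomonotone-to-quasimonotone implication is inherited uniformly over the whole line $L$ (and over all selections in the set-valued setting), so that the theorem's hypothesis is matched verbatim rather than merely pointwise.
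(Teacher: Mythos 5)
Your proposal is correct and follows essentially the same route as the paper: the paper dispatches this corollary in one line by noting that every pseudomonotone map is quasimonotone and then invoking the ``only if'' direction of Theorem~\ref{mainthm}. Your write-up simply makes the pseudomonotone-to-quasimonotone step explicit via Definition~\ref{monotonedefi}(iv)--(v), which is a reasonable elaboration of the same argument.
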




\medskip

\subsection*{Declarations} 
The author declares that there was no conflict of interest or competing interest. This work was done without any resource of funding. Data Availability is not applicable.



\end{document}